\newcommand{\giv}{\,|\,}
\newcommand{\ka}{\varkappa}
\newcommand{\ed}{\stackrel{d}{=}}
\newcommand{\tP}{\widetilde{P}}
\newcommand{\prob}{{\mathbb P}}
\newcommand\Nat{\mathbb N}
\newtheorem{theorem}{Theorem}
\theoremstyle{definition}
\begin{document}

\title{A Species Sampling Model with Finitely many Types }

\author{Alexander Gnedin}\thanks{Department of Mathematics, 
Utrecht University, PO Box 80 010, 3508 TA Utrecht, The Netherlands; e-mail A.V.Gnedin@uu.nl}

\maketitle

\begin{abstract}\noindent  
A  two-parameter  family of  exchangeable partitions with a
simple updating rule is introduced.
The partition is identified with a randomized version of 
a  standard symmetric Dirichlet species-sampling model with finitely many types.
A power-like distribution for the number of types is derived.

\end{abstract}
\noindent
AMS 2000 Subject Classification: 60G09, 60C05\\
Keywords: exchangeability, Gibbs partition, succession rule

\section{Introduction} \label{Intro}
\noindent
The Ewens-Pitman two-parameter family of exchangeable partitions $\Pi^{\alpha,\theta}$
of an infinite set 
has become a central model for species sampling  
(see \cite{GHaulkP, CSP} for extensive background on exchangeability and properties of these partitions).
One most attractive feature of this model is the following explicit 
rule of succession, which we formulate as  sequential allocation of 
balls labelled $1,2,\dots$ in a series of boxes.
Start with  box $B_{1,1}$ with a single ball $1$.
At step $n$ the allocation of $n$ balls is a certain 
random partition $\Pi_n^{\alpha,\theta}$
of the set of balls $[n]:=\{1,\dots,n\}$  into some number $K_n$ of nonempty boxes
$B_{n,1},\dots,B_{n,K_n}$, which we identify with their contents,
and list the boxes  by increase of the  
minimal labels of balls\footnote{Which means that
$\min([n]\setminus (\cup_{i=1}^{j-1} B_{n,i})) \in B_{n,j}$ for $1\leq j\leq K_n$.}.
Given at step $n$ the number of occupied boxes is 
$K_n=k$,  and the occupancy counts are $\#B_{n,j}=n_j$ for $1\leq j\leq k$ (so $n_1+\dots+n_k=n$), 
the partition $\Pi_{n+1}^{\alpha,\theta}$ of $[n+1]$ 
at step $n+1$ is obtained by randomly placing ball
$n+1$ according to the rules
\begin{itemize}
\item[$({\rm O}^{\alpha,\theta})$]:  in an {\it old} box $B_{n+1,j}:=B_{n,j}\cup\{n+1\}$ with probability 
$$\omega_{n,j}^{\alpha,\theta}(k;n_1,\dots,n_k):={n_j-\alpha\over n+\theta}$$
\item[$({\rm N}^{\alpha,\theta})$]: in a {\it new} box $B_{n+1,k+1}:=\{n+1\}$ with probability 
$$\nu_n^{\alpha,\theta}(k;n_1,\dots,n_k):= {\theta+k\alpha\over n+\theta}.$$
\end{itemize}
For instance, if at step $n=6$ the  partition is  $\{1,3\},\{2,5,6\}, \{4\}$,
then ball $7$ is added to one of the old boxes $\{1,3\}$ or $\{2,5,6\}$ or $\{4\}$ with probabilities specified by $({\rm O}^{\alpha,\theta})$,
and a new box $\{7\}$ is created according to $({\rm N}^{\alpha,\theta})$.

Eventually, as all balls $1,2,\dots$ get allocated in boxes $B_j:=\cup_{n\in \Nat}B_{n,j}$, 
the collection of occupied boxes is almost surely
infinite if the parameters $(\alpha, \theta)$ are in the range 
$\{(\alpha,\theta): 0\leq\alpha< 1, \theta>-\alpha\}$. In contrast to that,
the collection of  occupied boxed has finite cardinality $\varkappa$ if
$\alpha<0$ and $-\theta/\alpha=\varkappa$;
this model,  
which is most relevant to the present study, 
has a long history going back to Fisher \cite{Fisher}.

\par Exchangeable {\it Gibbs partitions} \cite{CSP, Gibbs} extend the Ewens-Pitman
 family\footnote{Here we are only
interested in infinite exchangeable Gibbs partitions. 
Finite Gibbs partitions of $[n]$ were discussed in \cite{CSP, Berestycki}, 
but these  are typically not consistent 
as $n$ varies.}.
The first rule is preserved in the sense that, 
given ball $n+1$ is placed in one of the old boxes, it is 
placed in box $j$ with probability $\omega_{n,j}$ still proportional to $n_j-\alpha$, where 
$\alpha<1$ is a fixed 
{\it genus} of the partition. 
But the second rule allows more general functions  $\nu_n$ of $n$ and $k$,
which agree with the first rule and the exchangeability of partition.
Examples of Gibbs partitions of genus $\alpha\in (0,1)$ 
were studied in \cite{Gibbs, James, Lijoi};
from the results of these papers one can extract complicated formulas  
expressing $\nu_n$ in terms of special functions.
See \cite{Lijoi2} for a survey of related topics and applications of random partitions 
to the Bayesian nonparametric inference.

The practitioner willing to adopt a partition from the Ewens-Pitman family as a species-sampling 
model faces the dilemma:
 the total number of boxes is either a fixed finite number (Fisher's subfamily) 
or it is infinite.
For applications it is desirable to also have  
tractable exchangeable partitions of $\Nat$ with finite but random number of boxes $K$. 
The present note suggests a two-parameter family of  partitions of the latter kind,
which are Gibbs partitions obtained by suitable mixing of Fisher's $\Pi^{-1,\theta}$-partitions over 
$\theta$,
where the randomized $\theta$ (which will be re-denoted $\ka:=\theta$) 
actually coincides with $K$. 
Equivalently, the partition can be generated by sampling from a mixture of symmetric 
Dirichlet random measures with  unit weights on $\ka$ points.

\section{Construction of the partition}\label{Section2}

\noindent
A new allocation rule is as follows.
 Start with  box $B_{1,1}$ containing a single ball $1$.
At step $n$ the allocation of $n$ balls is a certain 
random partition  $\Pi_n=(B_{n,1},\dots,B_{n,K_n})$
of the set of balls $[n]$.
Given the number of boxes is 
$K_n=k$,  and the occupancy counts are $\#B_{n,j}=n_j$ for $1\leq j\leq k$, 
the partition of $[n+1]$ 
at step $n+1$ is obtained by randomly placing ball
$n+1$ 
\begin{itemize}
\item[(O)]:  in an {\it old} box $B_{n+1,j}:=B_{n,j}\cup\{n+1\}$ with probability 
$$\omega_{n,j}(k;n_1,\dots,n_k):= {(n_j+1)(n-k+\gamma) \over n^2+\gamma n+\zeta},~~j=1,\dots,k,$$
\item[(N)]: in a {\it new} box $B_{n+1,k+1}:=\{n+1\}$ with probability 
$$\nu_n(k;n_1,\dots,n_k):={k^2-\gamma k+\zeta\over n^2 +\gamma n+\zeta}.$$
\end{itemize}
To agree with the rules of probability the parameters $\gamma$ and $\zeta$ must be chosen so that
$\gamma\geq 0$  and (i) either $k^2-\gamma k+\zeta$ is (strictly) positive  for all 
$k\in {\mathbb N}$,  or (ii) the quadratic is positive for $k\in \{1,\dots,k_0-1\}$ and has a root at
$k_0$. In the case (ii) the number of occupied boxes never exceeds $k_0$.
Part (O)  is similar to the $({\rm O}^{\alpha,\theta}$)-prescription with $\alpha=-1$: given 
ball $n+1$ is placed in one of the old boxes,
it is placed in box $j$ with probability proportional to $n_j+1$.
But part (N) is radically different from $({\rm N}^{\alpha,\theta}$) in that the probability  
of creating a new box is a ratio of quadratic polynomials in $k$ and $n$.

The probability of every particular partition $B_{n,1},\dots,B_{n,K_n}$ with $K_n=k$ boxes containing
 $n_1,\dots,n_k$ balls is easily calculated as
\begin{equation}\label{EPPF}
p(n_1,\dots,n_k)= {(\gamma)_{n-k}   \prod_{i=1}^{k-1} (i^2-\gamma i+\zeta)\over \prod_{m=1}^{n-1} (m^2+\gamma m+\zeta) }
\prod_{j=1}^k n_j!\,~~~~
\end{equation}
(with $(a)_m:=a(a+1)\dots(a+m-1)$), where $(n_1,\dots,n_k)$ is an arbitrary composition of integer $n$,
that is a vector of some length $k\in\Nat$ whose components $n_j\in\Nat$ satisfy $\sum_{j=1}^k n_j=n$. 
The function $p$ is sometimes called {\it exchangeable partition probability function} (EPPF) 
\cite{CSP}.
For instance, the probability that the set of balls $[6]$ is allocated  after completing step 6  
in three boxes 
$\{1,3\},\{2,5,6\}, \{4\}$ is equal to $p(2,3,1)$.
Formula (\ref{EPPF}) is a familiar {\it Gibbs form} of exchangeable partition of genus $\alpha=-1$
(see \cite{Gibbs, CSP} and Section \ref{Gib}).
An exchangeable partition $\Pi$ of the infinite set of balls $\Nat$ is defined as the allocation of balls 
in boxes $B_j:=\cup_{n\in\Nat} B_{n,j},$
with the convention $B_j=\varnothing$ in the event $K_n<j$ for all $n$.
For $\gamma=0$ the partition $\Pi$ has only  
singleton boxes.

The formula for 
\begin{equation}\label{v_nk}
v_{n,k}:= 
 {(\gamma)_{n-k}   \prod_{i=1}^{k-1} (i^2-\gamma i+\zeta)\over \prod_{m=1}^{n-1} 
(m^2+\gamma m+\zeta) }
\end{equation}
can be  fully split in linear factors as
$$v_{n,k}=   {(\gamma)_{n-k}  (s_1+1)_{k-1}(s_2+1)_{k-1} 
\over (z_1+1)_{n-1} 
(z_2+1)_{n-1}},$$
by factoring the quadratics as
$$x^2+\gamma x+\zeta=(x+z_1)(x+z_2),~\quad   x^2-\gamma x+\zeta=(x+s_1)(x+s_2),$$
for some complex $z_1,z_2,s_1,s_2$.

Using exchangeability and applying Equation (20) from \cite{PitmanPTRF}, 
the total probability that the occupancy counts 
are $(n_1,\dots,n_k)$ equals
\begin{eqnarray*}
\prob(K_n=k, \#B_{n,1}=n_1,\dots,\#B_{n,K_n}=n_k)= \\
{n!\over \prod_{j=1}^k \{(n_j+\dots+n_k)(n_j-1)!\}}
~p(n_1,\dots,n_k)=\\
v_{n,k} n!\prod_{j=1}^k  {n_j\over n_j+\dots+n_k}
\end{eqnarray*}
Let $K_{n,r}=\#\{1\leq j\leq K_n: \#B_{n,j}=r\}$ be the number of boxes occupied by exactly $r$ out of $n$
balls.
By standard counting arguments, the last formula can be re-written as  
$$
\prob(K_{n,r}=k_r,~r=1,\dots,n)= 
v_{n,k} n!
\prod_{r=1}^n {1\over k_r!}
$$
for arbitrary integer vector of multiplicities $(k_1,\ldots,k_n)$, with $k_r\geq 0$, $\sum_{r=1}^n k_r=k$ and $\sum_{r=1}^n r k_r=n$.

\section{Mixture representation and the number of occupied boxes}
\noindent
Like for any Gibbs partition of genus $-1$,
the number of occupied boxes
$K_n$ is a sufficient statistic for 
the finite partition $\Pi_n$,
meaning that conditionally given $K_n=k$ the probability of each particular 
value of $\Pi_n$ with occupancy counts $n_1,\dots,n_k$ equals
$${\prod_{j=1}^k n_j!\over d_{n,k}},$$
where the normalization constant is a Lah number \cite{Charalambides}
\begin{equation}\label{lah}
d_{n,k}={n-1\choose k-1}{n!\over k!}.
\end{equation}
The sequence $(K_n, n=1,2,\dots)$ is a nondecreasing Markov chain with $0-1$ increments
and transition probabilities determined by the rule (N). 
The distribution of $K_n$ is  calculated as 
\begin{equation}\label{distKn}
\prob(K_n=k)= d_{n,k}v_{n,k}.
\end{equation}

By monotonicity, the limit  $K:=\lim_{n\to\infty} K_n$ 
exists almost surely,
and coincides with the number of nonempty boxes for the infinite partition $\Pi$. 
Letting $n\to\infty$ in (\ref{distKn})
and using the standard asymptotics $\Gamma(n+a)/\Gamma(n+b)\sim n^{a-b}$
we derive from (\ref{v_nk}), (\ref{lah}) 

\begin{equation}\label{distKgen}
{\mathbb P}(K=\ka)={\Gamma(z_1+1)\Gamma(z_2+1)\over \Gamma(\gamma)}   \,\,
{\prod_{i=1}^{\ka-1}(i^2-\gamma i+\zeta)\over \ka!(\ka-1)!}.
\end{equation}

The basic structural result about $\Pi$ is the following:

\begin{theorem}\label{Thm}
Partition $\Pi$ is a mixture of partitions $\Pi^{-1,\ka}$ 
over the parameter $\varkappa$, with a proper  mixing distribution 
 given by {\rm (\ref{distKgen})}.
\end{theorem}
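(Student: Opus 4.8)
The plan is to identify the partition $\Pi$ with the Gibbs partition of genus $-1$ whose mixing measure over the Fisher parameter $\ka$ is exactly the law (\ref{distKgen}), by matching the two allocation rules at the level of conditional structure and then checking the mixture is proper. First I would recall that, by the results cited around (\ref{EPPF}), any exchangeable partition of genus $\alpha=-1$ is a Gibbs partition: its EPPF factors as $v_{n,k}\prod_j n_j!$ with the same box-size factor $\prod_j n_j!$ as in Fisher's model $\Pi^{-1,\ka}$, so the whole model is determined by the sequence $(v_{n,k})$. For Fisher's $\Pi^{-1,\ka}$ one has the explicit $v_{n,k}^{(\ka)}=\frac{(\ka-k)!\,\ka!}{(\ka)!\,?}$ — more precisely the Gibbs weight coming from $({\rm N}^{-1,\ka})$ with $\theta=\ka$, namely $v_{n,k}^{(\ka)}= \ka^{\downarrow k}/(\ka)_n$ up to the standard normalization, and it vanishes for $k>\ka$. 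The key algebraic identity to verify is then
\begin{equation}\label{mixid}
v_{n,k}=\sum_{\ka\ge k} v_{n,k}^{(\ka)}\;{\mathbb P}(K=\ka),
\end{equation}
with ${\mathbb P}(K=\ka)$ given by (\ref{distKgen}); summing (\ref{mixid}) against $d_{n,k}\prod_j n_j!$ over compositions would then give the claimed mixture of $\Pi_n^{-1,\ka}$ for every $n$, hence for $\Pi$ by consistency.

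The cleanest route to (\ref{mixid}), and the one I would actually carry out, avoids hand-to-hand combat with the product $\prod_{i=1}^{k-1}(i^2-\gamma i+\zeta)$: instead I would argue via the de Finetti / sufficiency structure already isolated in Section~3. Since $K_n$ is a sufficient statistic, conditionally on $K_n=k$ the partition $\Pi_n$ has the fixed law $\prod_j n_j!/d_{n,k}$, which is \emph{exactly} the conditional law of $\Pi_n^{-1,\ka}$ given $K_n^{(\ka)}=k$ for any $\ka\ge k$ — this is the defining feature of genus $-1$. Therefore $\Pi$ is a mixture of the $\Pi^{-1,\ka}$ as soon as the \emph{directing measure} is supported on the extreme points indexed by $\ka\in\Nat$, i.e. as soon as the limiting frequencies of $\Pi$ put no mass on a continuous part. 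But the frequencies of a genus $-1$ partition are $(1/K,\dots,1/K)$ on $K$ atoms with $K=\lim K_n$ finite a.s. by the monotonicity argument already given; so the directing random measure is $\frac1K\sum_{j=1}^K\delta_{U_j}$ with $U_j$ i.i.d. uniform, which is precisely the Fisher/symmetric-Dirichlet-with-unit-weights law conditionally on $K=\ka$. The mixing distribution of $K$ is then forced to be the law of $K$, computed in (\ref{distKgen}), and it is proper because (\ref{distKn}) is a probability distribution for each $n$ and the total mass is preserved in the limit (monotone convergence, using $\sum_k \prob(K_n=k)=1$). This also covers case (ii): a root at $k_0$ makes ${\mathbb P}(K=\ka)=0$ for $\ka\ge k_0$, matching $K\le k_0$.

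The main obstacle, and the step where I expect to spend the most care, is twofold: (a) pinning down the exact normalization of $v_{n,k}^{(\ka)}$ for Fisher's model and then verifying that (\ref{distKgen}) is genuinely a probability distribution — i.e. that $\sum_{\ka\ge1}{\mathbb P}(K=\ka)=1$ — which is equivalent to the $n\to\infty$ limit of the identity $\sum_k d_{n,k}v_{n,k}=1$ and should follow from dominated/monotone convergence once one checks the tail of (\ref{distKgen}) is summable (the paper advertises a ``power-like'' decay, so $\prod_{i=1}^{\ka-1}(i^2-\gamma i+\zeta)/(\ka!(\ka-1)!)$ behaves like $\ka^{-c}$ for suitable $c>1$, giving summability and properness); and (b) justifying the exchange of ``mixture over $\ka$'' with ``$n\to\infty$'', i.e. that a consistent family of mixtures of $\Pi_n^{-1,\ka}$ assembles into a mixture of the infinite partitions $\Pi^{-1,\ka}$ — this is where I would invoke that the mixing measure is the fixed law of $K$ independent of $n$, so there is nothing to exchange, and Kolmogorov extension plus uniqueness of the EPPF does the rest.
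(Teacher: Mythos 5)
The first half of your argument is essentially the paper's first step: by the Gibbs/genus $-1$ structure (sufficiency of $K_n$), conditionally on the number of boxes the partition has the same law as Fisher's, so $\Pi\,|\,\{K=\ka\}\ed\Pi^{-1,\ka}$, and the theorem reduces to showing that the mixing law (\ref{distKgen}) is proper, i.e.\ $\prob(K=\infty)=0$. You identify that reduction correctly, but every justification you offer for the properness step fails. Monotone convergence does not ``preserve total mass'': since $K_n\uparrow K$ one only gets $\prob(K\le\ka)=\lim_n\prob(K_n\le\ka)$ for each fixed $\ka$, and mass can escape to $\ka=\infty$ --- for $\gamma=0$ one has $K_n=n$ a.s., each $K_n$ has a proper law, yet $K=\infty$ a.s.\ and all limiting masses vanish. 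Summability of the power tail $\ka^{-(\gamma+1)}$ says nothing about the sum being $1$. And ``$K$ finite a.s.\ by the monotonicity argument already given'' is circular: monotonicity yields existence of $\lim K_n$ in $\N\cup\{\infty\}$, not finiteness, and finiteness is exactly the content of properness. The missing idea, which is the whole of the paper's proof, is that in the extreme decomposition of genus $-1$ Gibbs partitions the only component with infinitely many boxes is the trivial singleton partition, so $\prob(K=\infty)$ equals the weight of that component; this weight is at most $\prob(K_n=n)$ for every $n$, and by rule (N), $\prob(K_n=n)=\prod_{m=1}^{n-1}(m^2-\gamma m+\zeta)/(m^2+\gamma m+\zeta)\to 0$ when $\gamma>0$. (Alternatively one could sum the series (\ref{distKgen}) directly, the ``calculus'' the paper deliberately avoids; you do neither.)

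Two secondary errors are worth flagging. The frequencies of $\Pi^{-1,\ka}$ are \emph{not} $(1/\ka,\dots,1/\ka)$: they form the size-biased permutation of a Dirichlet$(1,\dots,1)$ vector, uniform on the simplex, so the directing measure is $\sum_j P_{\ka,j}\delta_{U_j}$, not $\tfrac1\ka\sum_j\delta_{U_j}$; the latter is the $\alpha=-\infty$ coupon-collector model, which the paper explicitly excludes. This detour is unnecessary (the sufficiency statement in the same sentence already identifies the conditional law), but as written it conflates two different models. Also, in case (ii) the masses (\ref{distKgen}) vanish for $\ka>k_0$, not $\ka\ge k_0$, since the vanishing factor $i=k_0$ enters the product $\prod_{i=1}^{\ka-1}(i^2-\gamma i+\zeta)$ only when $\ka-1\ge k_0$; with your reading the support would be $\{1,\dots,k_0-1\}$, contradicting your own claim that $K\le k_0$ is attained.
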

\begin{proof}
As every other Gibbs partition of genus $-1$, partition $\Pi$ satisfies the conditioning relation
\begin{equation}\label{mix}
\Pi\,|\,\{K=\ka\}   ~~      \ed \Pi^{-1,\ka},
\end{equation}
which says that given $K$ the partition has the same distribution as some 
Fisher's partition of genus $-1$.
We only need to verify that the weights in (\ref{distKgen}) add up to the unity.

To avoid calculus, note that by the general theory \cite{Gibbs} 
$\Pi$ is a mixture of the $\Pi^{-1,\ka}\,$'s with $\ka\in {\mathbb N}$, and the trivial singleton 
partition. Because every $\Pi^{-1,\ka}$ has $\ka$ boxes, the probability
 ${\mathbb P}(K=\infty)$ is equal to the 
weight of the singleton component in the mixture.
But for the singleton partition of $[n]$
the number of boxes is equal to the number of balls, thus
it remains to check that ${\mathbb P}(K_n=n)\to 0$ as $n\to\infty$, which is easily done by inspection
 of the transition
rule (N).
\end{proof}

As $\ka\to\infty$, the masses (\ref{distKgen}) exhibit a power-like decay,
$${\mathbb P}(K=\ka)\sim\,\,{c\over \ka^{\gamma+1}} 
 \quad {\rm with ~~~~} 
c={\Gamma(z_1+1)\Gamma(z_2+1)\over\Gamma(\gamma)\Gamma(s_1+1)\Gamma(s_2+2)}
\,.$$
This explains, to an extent, the role of parameter $\gamma$. 
In particular, ${\mathbb E}K$ may be finite or infinite, depending on whether $\gamma> 1$, 
or $\gamma\leq 1$.

\section{Frequencies }

\noindent
Recall some standard facts about the
partition $\Pi^{-1,\ka}$ (see \cite{CSP}). 
This partition with $\ka$ boxes $B_1,\dots,B_\ka$ can be generated by the following steps:
\begin{itemize} 
\item[(b)] choose a value $(y_1,\dots,y_\ka)$ for the probability vector $(P_{\ka,1},\ldots,P_{\ka,\ka})$ 
uniformly distributed on the $(\ka-1)$-simplex
$\{(y_1,\dots,y_\ka): y_i>0, \sum_{i=1}^\ka y_i=1\}$,
\item[(c)] allocate balls $1,2,\ldots$ independently in 
$\ka$ boxes  with probabilities $y_1,\dots,y_\ka$ of placing a ball in each of these boxes, 
\item[(d)] arrange the boxes by increase of the smallest labels of balls.
\end{itemize}
The vector of {\it frequencies} $(\tP_{\ka,1},\dots,\tP_{\ka,\ka})$, defined through limit proportions
 \begin{equation}\label{fre}
\tP_{\ka,j}:=\lim_{n\to\infty}{\#(B_{j}\cap[n])\over n}, \quad 1\leq j\leq \ka, 
\end{equation}
has the same distribution as the {\it size-biased permutation} 
of $(P_{\ka,1},\dots,P_{\ka,\ka})$.
The frequencies have a convenient stick-breaking representation 
\begin{equation}\label{stbr}
\tP_{\ka,j}=W_j\prod_{i=1}^{j-1} (1-W_i), \quad{\rm with~~independent~~} W_i\ed{\rm beta}(2,\ka-i),
\end{equation}
where $i=1,\dots,\ka$ and ${\rm beta}(2,0)$ is a Dirac
mass at $1$. See \cite{GHaulkP} for characterizations of 
this and other Ewens-Pitman partitions 
through independence of factors in such a stick-breaking representation.

\par Now let us apply the above to the partition $\Pi$. 
The mixture representation in Theorem \ref{Thm}  implies that
$\Pi$ can be constructed by first

\begin{itemize}
\item[(a)] choosing a value $\ka$ for $K$ from distribution (\ref{distKgen}),
\end{itemize}
then following the above steps (b), (c) and (d).
The frequencies $(\tP_1,\dots,\tP_K)$ of nonempty boxes $B_1,\dots,B_K$
are obtainable from (\ref{stbr}) by mixing with weights (\ref{distKgen}).

\section{Exchangeable sequences}

\noindent
Let $(S,{\mathcal B},\mu)$ be a Polish  space with a nonatomic probability measure $\mu$.
Let $\Pi$ be the partition of $\Nat$ constructed above 
and $T_1,T_2,\dots$ be an i.i.d. sample from $(S,{\mathcal B},\mu)$, also independent of $\Pi$.
With these random objects one naturally associates an infinite exchangeable $S$-valued sequence 
$X_1,X_2,\dots$ with marginal distributions $\mu$, as follows
(see \cite{Aldous, HPitman}). 
Attach to every ball in box $B_j$ the same tag $T_j$, for $j=1,\dots, K$. 
Then define $X_1,X_2,\dots$ to be the sequence of tags of balls $1,2,\dots$.

Obviously, $K,T_1,\dots,T_K$ and $\Pi$ can be recovered from  $X_1,X_2,\ldots$. Indeed,
$T_j$ is the $j$th distinct value in the sequence $X_1,X_2,\ldots$
and  $B_j=\{n: X_n=T_j\}$ for $j=1,\dots,K$.
The same applies to finite partitions $\Pi_n$ with 
 $B_{n,j}=B_j\cap[n]$.

The {\it prediction rule} \cite{HPitman} associated with $X_1,X_2,\dots$ is the formula for conditional distribution 
$$\prob(X_{n+1}\in ds\giv X_1,\dots,X_n)=\sum_{j=1}^{K_n}  \omega_{n,j} \delta_{T_j}(ds) + \nu_n\mu(ds),$$ 
where $T_1,\dots,T_{K_n}$ are the distinct values in $X_1,\dots,X_n$, 
and $\omega_{n,j},\nu_n$ are the functions of the partition $\Pi_n$, 
as specified by the rules (O) and (N).

The random measure $F$ in de Finetti's representation of $X_1,X_2,\dots$  is a mixture  
$$F=\sum_{\ka=1}^\infty {\mathbb P}(K=\ka)\,F_\ka\,,$$ 
where
$$
F_\ka(ds)=\sum_{j=1}^\ka P_{\ka,j}\delta_{\hat{T}_j}(ds), \quad\ka\in \Nat$$
are
Dirichlet$(\underbrace{1,\dots,1}_\ka)$ random measures on $(S,{\mathcal B},\mu)$, that is the vector
$(P_{\ka,1},\dots,P_{\ka,\ka})$ is uniformly distributed on the $(\ka-1)$-simplex and is independent of 
$(\hat{T}_1,\hat{T_2},\dots)$,
and the random variables $\hat{T}_j$'s are i.i.d.$(\mu)$.

\section{The case $\zeta=0$}
\noindent
We focus now on the case $\zeta=0$. Then $\gamma\in [0,1]$
is the admissible range, but we shall  
exclude the trivial edge cases $\gamma=0$, respectively, $\gamma=1$ of the singleton and 
single-box partitions.

Formula (\ref{v_nk}) simplifies as 
$$v_{n,k}= 
 {(k-1)! (1-\gamma)_{k-1} (\gamma)_{n-k}\over (n-1)!(1+\gamma)_{n-1}},$$
and there is a further  obvious cancellation of some factors.
Furthermore, (\ref{distKgen}) specializes as
\begin{equation}\label{distK}
\prob(K=\ka)={\gamma(1-\gamma)_{\ka-1}\over \ka!},~~~\ka=1,2,\dots
\end{equation}
which is a distribution familiar 
from the discrete renewal theory 
(a summary is found in \cite{PitmanBernoulli}, p. 85). 
The distribution
has also appeared in connection 
with $\Pi^{\alpha,\theta}$ ($0<\alpha<1$)
partitions and other occupancy problems \cite{RegCS, GHansenP, GPYI, PitmanBernoulli}.

Thinking  of (\ref{distK}) as a prior distribution for $K$, 
the posterior distribution is found 
from (\ref{distKn}), (\ref{distK}) and the distribution of the number of occupied 
boxes for the $\Pi^{-1,k}$ partition (instance of Equation (3.11) in \cite{CSP}):
\begin{equation}\label{terminal}
\prob(K=\varkappa\giv K_n=k)=
{(n-1)!\over (k-1)!(\varkappa+n-1)!}\prod_{i=1}^{k-1}(\varkappa-i)
\prod_{j=1}^k(\gamma+n-j)\prod_{l=k}^{\varkappa-1} (l-\gamma), ~~~~
\end{equation}
for $1\leq k\leq n, ~~~\varkappa\geq k$. Note that the conditioning here
can be replaced by conditioning on an arbitrary value of the partition $\Pi_n$ with $k$ boxes.


The frequency $\tP_1$ of box $B_1$ has distribution
$$\prob(\tP_1\in dy)= \sum_{\ka=1}^\infty  {\gamma(1-\gamma)_{\ka-1}\over\ka!}\,\,  
\prob(\tP_{\ka,1}\in dy)=    \gamma\delta_1(dy)+ (1-\gamma)\gamma y^{\gamma-1}dy, \quad y\in (0,1],$$  
which is a mixture of Dirac mass at 1 and beta$(\gamma,1)$ density. Interestingly,
distributions of this kind have appeared in connection with other partition-valued processes
\cite{RegCS, GYak}.
The distribution is useful to compute expected values of symmetric statistics of the frequencies 
of the kind
$\sum_{j=1}^K f(\tP_j)$ \cite{CSP},
for example
$${\mathbb E}\left(\sum_{j=1}^K \tP_j^{n}\right)= {\mathbb E}\left(\tP_1^{n-1}\right)={n\gamma\over n+\gamma-1},$$ 
which agrees with the $\prob(K_n=1)$ instance of (\ref{distKn}).

\section{Restricted  exchangeability}

\newcommand{\bb}{{\bf b}}
\noindent
It is of interest to explore
a more general situation when the  process starts with some initial allocation   
of a few balls in boxes. This can be thought of as prior information of the observer about the
existing species. 
For simplicity we shall only consider the case $\zeta=0$.

Fix $m\geq 1$ and a partition $\bb=(b_1,\dots,b_k)$ of $[m]$ with $k$ positive box-sizes $\#b_j=m_j,~j=1,\dots,k$.
Let $\prob_\bb$ be the law of the infinite partition $\Pi$  
constructed by the rules (O) and (N) starting with the initial allocation of balls $\Pi_m=\bb$. In particular, 
$\prob=\prob_{\{1\}}$.
Note that $\prob_\bb$ is well defined for any value of the parameter in the range
$$-(m-k)<\gamma<k,$$
and for  $\gamma\in (0,1)$ the measure
$\Pi_{\{1\}}$, conditioned on 
$\{\Pi_m=\bb\}$, coincides with  $\prob_\bb$.
Explicitly, under $\prob_\bb$ every value of  $\Pi_n=(B_{n,1},\dots,B_{n,K_n})$ with
$$K_n=\varkappa\geq k,~~\#B_j=n_j, ~j=1,\dots,\ka; \quad n_i\geq m_i,~i=1,\dots,k$$ 
has probability
$$ p_\bb(n_1,\dots,n_\ka):= {p(n_1,\dots,n_{\ka})\over p(m_1,\dots,m_k)}\,,$$
where $p$ is given by (\ref{EPPF}). Formula (\ref{distK}) for the terminal distribution of the number of boxes
is still valid for the extended range of $\gamma$.

\par Observing that $p_\bb$ is symmetric in the arguments $n_j$ for $k\leq j\leq \ka$, it follows that $\prob_\bb$ 
is invariant under
permutations of the set $\Nat\setminus[m]$. 
On the other hand, 
for every permutation  $\sigma:[m]\to[m]$ we have
$\prob_{\sigma b}=\sigma \prob_b$.  
Moreover, the restriction of $\Pi$ on $\Nat\setminus[m]$
under $\prob_\bb$ has the same law as under $\prob_{\sigma \bb}$, that is the restriction depends on $\bb$ only through
 $(m_1,\dots,m_k)$.

{\bf Examples} Suppose  $\gamma=1$. Then 
$\prob_{\{1\}}(K=1)=1$ which corresponds to the trivial one-block partition, but 
$\prob_{\{1\},\{2\}}(K=\ka)={2\over \ka(\ka+1)}$ for $\ka\geq 2$.

Suppose $\gamma=0$.
Then $\Pi$  under $\prob_{\{1\}}$ is the trivial singleton partition, but under $\prob_{\{1,2\}}$ we have
$\prob_{\{1,2\}}(K=\ka)={1\over \ka(\ka+1)}$ for $\ka\geq 1$.

\section{General Gibbs partitions and the new family}\label{Gib}
\noindent
Both the Ewens-Pitman family and the partitions introduced in this note can be constructed in  
a unified way, using simple algebraic identities.
Recall from \cite{Gibbs, CSP} that the  Gibbs form for EPPF 
  $p$ 
of genus $\alpha\in (-\infty,1)$ is \footnote{We omit here the case 
$\alpha=-\infty$.}
$$p(n_1,\dots,n_k)=v_{n,k}\prod_{j=1}^k (1-\alpha)_{n_j-1},$$
where the triangular array $(v_{n,k})$ 
is  nonnegative and
 satisfies the recursion 
\begin{equation}\label{vrec}
v_{n,k}=(n-k\alpha)v_{n+1,k}+v_{n+1,k+1},\qquad 1\leq k\leq n
\end{equation}
with normalization ~$v_{1,1}=1$.  
The recursion goes backwards, from $n+1$ to $n$, thus it cannot be `solved' in a unique way
rather has a convex set of solutions, each corresponding to distribution of some exchangeable partition.

For Gibbs partition 
the number of occupied boxes $(K_n, n=1,2,\dots)$ 
is a  nondecreasing Markov chain, viewed conveniently as a bivariate space-time walk $(n,K_n)$,
which has backward transition probabilities 
depending on $\alpha$ but not on $(v_{n,k})$.
The backward transition probabilities are determined from the conditioning relation: 
given $(n,K_n)=(n,k)$, the probability of each admissible path from 
$(1,1)$ to $(n,k)$ is proportional to the product of {\it weights} along the path,
where 
the weight of transition $(n,k)\to(n+1,k)$ is $n-k\alpha$, and that of $(n,k)\to(n+1,k+1)$ is 
$1$. The normalizing total sum $d_{n,k}(\alpha)$ of such products over the paths from $(1,1)$
 to $(n,k)$ is known as a  generalized Stirling number
\cite{Charalambides}.
Each particular solution to (\ref{vrec}) determines the  law of $(K_n)$ 
via the marginal distributions
${\mathbb P}(K_n=k)=v_{n,k}d_{n,k}(\alpha)$.

Large-$n$ properties of Gibbs partitions depend on $\alpha$. 
In particular, there exists an almost-sure limit
$K=\lim_{n\to\infty}K_n/c_n(\alpha)$, where 
 $c_n(\alpha)=n^\alpha$ for $\alpha\in (0,1)$, 
$c_n(0)=\log n$ and $c_n(\alpha)\equiv 1$ for $\alpha<0$.
The law of $K$ is characteristic for partition  of given genus.
That is to say, a generic Gibbs partition is a unique mixture over $\varkappa$ 
of {\it extreme}  partitions for which $K=\varkappa$ a.s. 
Note that $K$ has continuous range  for $\alpha\in [0,1)$, 
and discrete for $\alpha<0$.
For $\alpha< 0$ the extremes are Fisher's partitions $\Pi^{\alpha,-\alpha\varkappa}$.
For $\alpha=0$ the extremes are Ewens' partitions $\Pi^{0,\varkappa}$ (with $\ka\in [0,\infty]$).
For $\alpha\in (0,1)$ Ewens-Pitman partitions are not extreme,
rather the extremes are obtainable by conditioning any
$\Pi^{\alpha,\theta}$ on $K=\varkappa$;
the $v_{n,k}$'s  for these extreme partitions 
were identified in \cite{James} in terms of the generalized
hypergeometric functions.

Following \cite{Boundary}, where recursions akin to  (\ref{vrec}) were treated,
one can seek for special solutions of the form 
\begin{equation}\label{vspecial}
v_{n,k}={\prod_{i=1}^{n-k} f(i)\prod_{j=1}^{k-1} g(j) \over\prod_{m=1}^{n-1}
 h(m)},
\end{equation}
where $f,g,h:{\mathbb N}\to{\mathbb R}$ satisfy the identity
\begin{equation}\label{ident} 
(n-\alpha k)f(n-k)+g(k)=h(n), \quad 1\leq k\leq n,~n\in {\mathbb N}.
\end{equation}
Moreover,
 $f, h$ must be  (strictly) positive on $\Nat$, 
while $g$ may be either positive on $\Nat$ or positive on some integer interval 
$\{1,\dots,k_0-1\}$ with 
$g(k_0)=0$.
Each such triple defines a Gibbs partition with the `new boxes' updating rule 
of the form  
$$\nu_n(k; n_1,\dots,n_k)={\mathbb P}(K_{n+1}=k+1\,|\,K_n=k)={g(k)\over h(n)}$$
(where $n=n_1+\dots + n_k$),  complemented by the associated
version of the (O)-rule (as in Sections \ref{Intro} and \ref{Section2}).

\par Now we can review two instances of (\ref{vspecial}):
\begin{itemize}
\item

Exploiting the identity  $n-\alpha k+\alpha k+\theta= n+\theta$
we may 
choose $f(n)\equiv 1, g(n)=\alpha n+\theta$ and $h(n)=n+\theta$.
This yields 
the Ewens-Pitman partitions with the
succession rule $({\rm N}^{\alpha,\theta})$. 
Note that the admissible range for $\alpha,\theta$ is determined straightforwardly from the positivity.

\item The identity 
$$(n-k+\gamma)(n+k)+k^2-\gamma k+\zeta=n^2+\gamma n +\zeta$$
is of the kind (\ref{ident}) with $\alpha=-1$.  
We choose $f(n)=n+\gamma, g(n)=n^2-\gamma n+\theta$ and $h(n)= n^2+\gamma n+\theta$
to arrive at the partitions itroduced in this paper.
\end{itemize}
It is natural to wonder if there are any other Gibbs partitions of the form  (\ref{vspecial}).

The ansatz (\ref{vspecial}) is sometimes useful to deal with recursions like (\ref{vrec}) 
with other weights depending in a simple way on $n$ and $k$ \cite{Boundary}. 
For instance, if both weights equal 1, then
each solution defines  distribution of an exchangeable $0-1$ sequence (see \cite{Aldous}),
for which the Markov chain $(K_n)$ counts the number of $1$'s among the first $n$ bits.
An instructive exercise is to construct by this method of specifying the triple
$f,g,h$ 
two distinguished families of the exchangeable processes -- 
the homogeneous Bernoulli processes and  P{\'o}lya's urns with two colors.  


\noindent
{\bf Acknowledgement} The author is indebted to J. Pitman,
an associated editor and a referee
for their stimulating questions and comments.

\end {document}